\newcommand{\p}{\mathbb{P}}
\newcommand{\F}{\mathbb{F}}
\newcommand{\lra}{\longrightarrow}
\newcommand{\ff}{\mathcal{F}}
\newcommand{\fq}{\mathbb{F}_q}
\newcommand{\kk}{\mathbb K}
\newcommand{\xx}{\mathcal X}
\newcommand{\yy}{\mathcal Y}
\newcommand{\cc}{\mathcal C}
\newcommand{\aut}{\operatorname{Aut}}
\newcommand{\fqc}{\overline{\mathbb{F}}_q}
\theoremstyle{plain}
\newtheorem{thm}{Theorem}[section]
\newtheorem{prop}[thm]{Proposition}
\newtheorem{rem}[thm]{Remark}
\newtheorem{ex}[thm]{Example}
 \font\numberfont= pzcmi scaled
\titleformat{\chapter}[display]
  {\normalfont\Large 
  }
  {
   \filright
   \rule[32pt]{.7\linewidth}{4pt}
   \hspace{-8pt}
   \shadowbox{
   \begin{minipage}{.15\linewidth}
     \begin{center}
          \textsl{\bf {\large \chaptertitlename}}\\
       \vspace{1ex}
       {\bf {\numberfont \thechapter}}\\
       \vspace{1ex}
     \end{center}
   \end{minipage}}
  }
  {-10pt}
  {\filcenter
           \sl
           \bf
              \Huge
     }
  [\vspace{-1cm}\singlespacing\hfill\rule{.8\textwidth}{0.5pt}\\
\vskip-2.8ex\hfill\rule{.7\textwidth}{4pt}\onehalfspacing\vspace*{-1ex}]
\titlespacing{\chapter}{0pt}{*4}{*1}
\titleformat{\section}[block]
{\normalfont\bfseries} {\thesection}{0.5em}{}
\titleformat{\subsection}[block]
{\normalfont\large\bfseries} {\thesubsection}{0.5em}{}
\numberwithin{equation}{section}
\begin{document}

\title{Separable degree of the Gauss map and strict dual curves over finite fields}

\author{\textbf{Nazar Arakelian} \\
 \small{Centro de Matem\'atica, Computa\c c\~ao e Cogni\c c\~ao - Universidade Federal do ABC}\\
\small{Av. dos Estados 5001, CEP 09210-580, Santo Andr\'e-SP, Brazil}\\
\small{n.arakelian@ufabc.edu.br} 
}

\maketitle
 \begin{abstract}
 Let $\xx$ be a projective algebraic curve and denote by $\xx^{'}$  its strict dual curve. The map $\gamma:\xx \lra \xx^{'}$ is called (strict) Gauss map of $\xx$. In this manuscript, we study the separable degree of the Gauss map of curves defined over finite fields. In particular, we give a generalization of a known result on the separable degree of the Gauss map of plane Frobenius nonclassical curves. We also obtain a characterization of certain plane strange curves. 
 \end{abstract}


\section {Introduction}\label{intro}

For an algebraically closed field $\kk$, let $\xx \subset \p^n(\kk)$ be a (geometrically irreducible algebraic) curve defined over $\kk$, where $\p^n(\kk)$ denotes the $n$-dimensional projective space defined over $\kk$. If $(\p^{n}(\kk))^{'}$ denotes the dual projective space, then the (strict) dual of $\xx$, denoted by $\xx^{'}$, is defined as the closure in $(\p^{n}(\kk))^{'}$ of the set of osculating hyperplanes at the nonsingular locus of $\xx$. The dual curve $\xx^{'}$ is a subcover of $\xx$, and the cover $\gamma:\xx \lra \xx^{'}$ is called the (strict) Gauss map of $\xx$. 

If the characteristic of $\kk$ is $0$, then it is well known that $\gamma$ is birational. Moreover, if $\gamma^{'}:\xx^{'} \lra \xx^{''}$ is the Gauss map of the dual curve $\xx^{'}$, then $\gamma^{'} \circ \gamma$ is the identity map, i.e., $\xx=\xx^{''}$, see \cite{Be} or \cite[Remark 2]{GV0}. When $\kk$ has characteristic $p>0$, the situation is the same as long as $p>n$ and $\xx$ is a classical curve, that is, the intersection multiplicity of $\xx$ and the osculating hyperplane at a generic point is $n$. In contrast, when $\xx$ is nonclassical, then $\gamma$ is not necessarily separable and its separable degree can be positive. Indeed, in \cite{GV0} several examples endorsing this facts are given. Furthermore, it follows as a particular case of a result of Kaji in \cite{Ka} that, given a plane curve $\cc$ and an arbitrary inseparable finite field extension $\mathbb{L}/\kk(\cc)$, there exists a plane curve $\xx$ such that $\gamma(\xx)=\cc$, $\kk(\xx) \cong \mathbb{L}$ and the extension $\kk(\xx)/\kk(\cc)$ coincides with $\mathbb{L}/\kk(\cc)$, where $\kk(\cc)$ (resp. $\kk(\xx)$) denotes the function field of $\cc$ (resp. $\xx$) (see also \cite{FI} for a more general result).

The properties of the Gauss map $\gamma$ and the dual $\xx^{'}$ of a curve $\xx$ over a field $\kk$ of characteristic $p>0$ have been investigated by many authors in the last decades, sometimes either in a more general or a parallel context, see \cite{FI,He,HV,Ka}. For instance, given a curve $\xx \subset \p^{n}(\kk)$ the Gauss map $\iota^{(m)}:\xx \lra \mathbb{G}(\p^n(\kk),m)$ of order $m$ ($1 \leq m \leq n-1$), which associates nonsingular points of $\xx$ with the respective $m$-osculating spaces, where $\mathbb{G}(\p^n(\kk),m)$ is the Grassmannian variety of $m$ spaces in $\p^n(\kk)$, is investigated by Homma and Kaji in \cite{HK} (note that $\iota^{(n-1)}=\gamma$). More precisely, if $(\varepsilon_0,\ldots,\varepsilon_n)$ denotes the order-sequence of $\xx$, i.e., the sequence of all the possible intersection multiplicities of $\xx$ with a hyperplane in $\p^n(\kk)$ at a generic point, \cite[Theorem 1]{HK} states that the inseparable degree of $\iota^{(m)}$ is the highest power of $p$ dividing $\varepsilon_{m+1}$. In particular, if $\varepsilon_n=kp^r$, where $k$ is prime to $p$, then the inseparable degree of the Gauss map $\gamma$ is $p^r$. 

In several situations, the computation of the separable degree of the Gauss map is desirable, specially when $\kk$ is the algebraic closure of a finite field. For example, in the proof of the Natural Embedding Theorem for maximal curves, in  \cite[Section 3]{KT}. Also in \cite[Section 3.3]{HKo} to obtain bounds on the size of complete arcs, among others. In this direction, the following Theorem is a consequence of a result by Hefez and Voloch on Frobenius nonclassical curves \cite[Proposition 3]{HV}. They actually were investigating Gaussian duals of such curves, to say, the closure of the image of the nonsingular locus of $\xx$ by the Gauss map in the Grassmannian of lines in $\p^n(\kk)$. Hence, for $n=2$, the strict dual and the Gaussian dual are the same.

\begin{thm}[Hefez-Voloch]\label{hefezvoloch}
Let $\xx$ be a plane curve defined over a finite field $\fq$. If the image of infinitely many points of $P \in \xx$ by the $\fq$-Frobenius map belongs to the tangent line to $\xx$ at $P$ (that is, $\xx$ is $\fq$-Frobenius nonclassical), then the Gauss map of $\xx$ is purely inseparable.
\end{thm}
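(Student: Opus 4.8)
The plan is to work geometrically over $\fqc=\overline{\F}_q$ and reduce the statement to a short computation inside the function field $K=\fqc(\xx)$. After a linear change of coordinates I may assume $\xx$ is non-degenerate (not a line) and that the affine coordinate function $x$ is a separating variable, so that $D=d/dx$ is a well-defined derivation of $K$ with $Dx=1$. Writing $y$ for the other affine coordinate and $s:=Dy$, the tangent line to $\xx$ at a generic point $(x,y)$ is $Y=sX+b$ with $b:=y-sx$; consequently the Gauss map sends $(x,y)$ to the tangent line, whose slope--intercept coordinates in the dual plane are $(s,b)$, so that $\fqc(\xx')=\fqc(s,b)\subseteq K$. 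Thus $\gamma$ being purely inseparable is precisely the assertion that $K/\fqc(s,b)$ is a purely inseparable extension, and the whole proof amounts to producing enough $q$-th powers inside $\fqc(s,b)$.

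Next I would convert the hypothesis into an algebraic identity. Since the $\fq$-Frobenius image $(x^q,y^q)$ of $(x,y)$ lies on the tangent line for infinitely many points, and this is a closed condition on the irreducible curve, it holds identically on $\xx$; that is, $y^q=s\,x^q+b$ in $K$. On the other hand, raising the defining relation $y=sx+b$ to the $q$-th power gives $y^q=s^q x^q+b^q$. Subtracting the two identities yields $(s^q-s)\,x^q=b-b^q$. Here I need $s^q\neq s$: since $\xx$ is not a line, $s=dy/dx$ is a non-constant function, hence $s\notin\fq$, and the only elements of $K$ fixed by the $q$-power map are those of $\fq$, so $s^q\neq s$. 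Therefore $x^q=(b-b^q)/(s^q-s)\in\fqc(s,b)$, and then $y^q=s\,x^q+b\in\fqc(s,b)$ as well.

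Finally, since $\fqc$ is perfect we have $K^q=\fqc(x^q,y^q)$, and the previous step shows $K^q\subseteq\fqc(s,b)=\fqc(\xx')\subseteq K$. As $K/K^q$ is purely inseparable (every $a\in K$ is a root of $(T-a)^q=T^q-a^q\in K^q[T]$), the intermediate extension $K/\fqc(\xx')$ is purely inseparable too, which is exactly the claim; note this is self-contained and does not require the order-sequence machinery behind \cite[Theorem 1]{HK}. The main obstacle is not the central computation, which is brief, but the bookkeeping in the setup: justifying the choice of a separating coordinate, verifying that the field generated by the tangent-line coordinates is indeed $\fqc(s,b)$, and ruling out the degenerate case $s\in\fq$ (a line), where the statement fails because $\fqc(\xx')=\fqc$. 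Upgrading ``infinitely many points'' to an identity on $K$ also deserves an explicit word.
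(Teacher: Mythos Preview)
Your argument is correct in outline and leads to a valid proof, but there is one step that does not hold as stated. You write ``since $\xx$ is not a line, $s=dy/dx$ is a non-constant function''. In positive characteristic this implication is false: any strange curve whose nucleus lies at infinity has constant slope $s$ while not being a line (for instance $y^p-y=x^2$ over $\F_p$ has $s=dy/dx=-2x$\ldots\ actually, better: $y=x+x^p$ has $s\equiv 1$). What saves you is the Frobenius nonclassical hypothesis itself: if $s\in\fq$, then $s^q=s$, and subtracting $y^q=sx^q+b$ from $y^q=s^qx^q+b^q$ gives $b=b^q$, so $b\in\fq$ as well; then $y=sx+b$ with $s,b\in\fq$ forces $\xx$ to be a line. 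So replace the incorrect implication by this short computation and the proof goes through.

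Your route is genuinely different from the paper's. The paper derives Theorem~\ref{hefezvoloch} as the $n=2$ case of Theorem~\ref{gen1} (see Remark~\ref{obsgen}): if two generic points $P,Q$ shared a tangent line $\ell$, then $\Phi_q(P),\Phi_q(Q)\in\ell$ by hypothesis, and since $P,Q$ span $\ell$ one gets $\Phi_q(\ell)=\ell$; thus $\xx'$ would have infinitely many $\fq$-rational points, a contradiction. This geometric argument is what the paper generalises to $\p^n$ in Theorems~\ref{main} and~\ref{n-1}. Your approach instead produces the explicit containment $K^q=\fqc(x^q,y^q)\subseteq\fqc(s,b)=\fqc(\xx')$ directly inside the function field. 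It is shorter and more self-contained for $n=2$, and in fact recovers the extra information $\deg_i\gamma\mid q$ without invoking \cite{HK}; the trade-off is that it does not obviously extend to osculating hyperplanes in higher dimension, which is the paper's main concern.
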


In \cite{GV0}, Garcia and Voloch presented several relations between the osculating spaces of a curve $\xx$ and the ones of its dual $\xx^{'}$. As a consequence, they establish some criteria for when the equality $\xx=\xx^{''}$ holds, and so when $\gamma$ has separable degree equals to $1$, provided that $\xx^{'}$ is non-degenerate. Furthermore, they give a different proof of Theorem \ref{hefezvoloch}, and show that the converse holds for some extension of $\fq$, with the assumption that $\xx$ is smooth and nonclassical \cite[Theorem 4]{GV0}.  

Related to this subject, we have the so called strange curves. A curve $\xx$ is called strange if all tangent lines to nonsingular points of $\xx$ are concurrent. In this case, the point $P$ in which all generic tangent to $\xx$ meet is called nucleus of $\xx$. Note that if $\xx$ is a plane curve, this is equivalent to the dual curve $\xx^{'}$ be a line. The properties of strange curves have been broadly investigated over the last years. For instance, in \cite{BH} many properties of such curves are established, such as general shape of their equation and their genera. 

In this paper, we basically study some maps on $\xx$ that induce maps on $\xx^{'}$, namely the Frobenius map and linear automorphisms. As a byproduct, we will provide a generalization of Theorem \ref{hefezvoloch} for certain Frobenius nonclassical curves defined over $\fq$. It also will be shown that for a curve defined over $\fqc$, if $\fqc(\xx^{'})$ is a subfield of a Galois subcover of $\fqc(\xx)$ by a linear automorphism group, then $\xx^{'}$ is degenerate, and some consequences of this fact will be exploited. This results will be obtained in Sections 3 and 4, after we establish some preliminary results and notation in Section 2. 

\section{Background and notation}\label{back}

For a prime number $p$, let $\fq$ be the finite field of order $q$, where $q=p^h$ with $h>0$. Let $\xx \subset \p^n(\fqc)$ be a (geometrically irreducible algebraic) curve defined over $\fq$, where $\fqc$ denotes the algebraic closure of $\fq$. Given an algebraic extension $\mathbb{H}$ of $\fq$, the function field of $\xx$ over $\mathbb{H}$ will be denoted by $\mathbb{H}(\xx)$. For a nonsingular point $P \in \xx$, there exists a sequence $(j_0(P), \ldots,j_n(P))$ of integers with $0 \leq j_0(P)<\cdots<j_n(P)$, called order sequence of $P$, which is defined by all the possible intersection multiplicities of $\xx$ and some hyperplane of $\p^n(\fqc)$ at $P$. When there is no risk of confusion, $j_i$ denotes $j_i(P)$, for $i=0,\ldots,n$. It turns out that, except for finitely many points on $\xx$, this sequence is the same, \cite[Section 1]{SV}. It is called order sequence of $\xx$, and denoted by $(\varepsilon_0,\ldots, \varepsilon_n)$. If $(\varepsilon_0,\ldots, \varepsilon_n)=(0,\ldots,n)$, the curve $\xx$ is said to be a classical curve. Otherwise, $\xx$ is nonclassical. Given a nonsingular point $P \in \xx$, there exists an unique hyperplane $H_P(\xx) \subset \p^n(\fqc)$ such that the intersection multiplicity of $H_P(\xx)$ and $\xx$ at $P$ is $j_n$. Such hyperplane is denoted by osculating hyperplane to $\xx$ at $P$. 

Let $t \in \fqc(\xx)$ be a separating element and $k$ a non-negative integer. Given $f \in \fqc(\xx)$, the $k$-th Hasse derivative of $f$ with respect to $t$ is denoted by $D_t^{(k)}f$. The following result is a consequence of \cite[Theorem 1.1]{SV}.

\begin{prop}\label{oschypeq}
Suppose that $\xx$ is defined by the coordinate functions $x_0,\ldots,x_n \in \fq(\xx)$. Let $P \in \xx$ be a nonsingular point and let $t$ be a local parameter at $P$. The osculating hyperplane at $P$ is spanned by the points 
$
\big((D_t^{(j_i)}x_0)(P):\cdots: (D_t^{(j_i)}x_n)(P) \big),
$
 for $i=0,\ldots,n-1$.
\end{prop}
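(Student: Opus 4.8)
The plan is to translate the geometric definition of the osculating hyperplane into a linear-algebra condition on the Hasse-derivative vectors, and then simply read off the spanning set. First I would recall that for a hyperplane $H \subset \p^n(\fqc)$ with equation $a_0X_0+\cdots+a_nX_n=0$ not containing $\xx$, the intersection multiplicity of $H$ and $\xx$ at $P$ equals $v_P(a_0x_0+\cdots+a_nx_n)$, the order of vanishing at $P$ of the corresponding linear combination of the coordinate functions. Thus the osculating hyperplane $H_P(\xx)$, characterized as the one attaining the maximal such multiplicity $j_n$, corresponds to the coefficient vector $(a_0,\ldots,a_n)$ for which this order is as large as possible.

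Next I would expand each coordinate function in the local parameter: by the defining property of the Hasse derivative, $x_\ell=\sum_{k\geq 0}(D_t^{(k)}x_\ell)(P)\,t^k$ in the completion of the local ring at $P$. Setting $v_k:=\big((D_t^{(k)}x_0)(P),\ldots,(D_t^{(k)}x_n)(P)\big)\in\fqc^{\,n+1}$, the coefficient of $t^k$ in $\sum_\ell a_\ell x_\ell$ is $\sum_\ell a_\ell (D_t^{(k)}x_\ell)(P)$, i.e. the pairing of $(a_0,\ldots,a_n)$ with $v_k$. Hence $v_P(\sum_\ell a_\ell x_\ell)\geq m$ if and only if $(a_0,\ldots,a_n)$ annihilates $v_0,\ldots,v_{m-1}$.

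From this reformulation the orders are exactly the indices $k$ at which $v_k$ fails to lie in the span of $v_0,\ldots,v_{k-1}$; this is precisely the content extracted from \cite[Theorem 1.1]{SV}, and it yields that $v_{j_0},\ldots,v_{j_n}$ are linearly independent, i.e. a basis of $\fqc^{\,n+1}$. Since $H_P(\xx)$ attains order $j_n$, its coefficient vector $(a_0,\ldots,a_n)$ must annihilate $v_{j_0},\ldots,v_{j_{n-1}}$ (equivalently, the span of $v_0,\ldots,v_{j_n-1}$) while not annihilating $v_{j_n}$; because the annihilator of the $n$-dimensional space $\mathrm{span}(v_{j_0},\ldots,v_{j_{n-1}})$ is one-dimensional, this determines $H_P(\xx)$ uniquely. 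Finally, by the double-annihilator relation the points of $\p^n(\fqc)$ lying on $H_P(\xx)$ form exactly the projective span of the $n$ independent points $[v_{j_0}],\ldots,[v_{j_{n-1}}]$, which is the asserted description.

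The main obstacle I expect is justifying rigorously that the jump indices of the filtration by $\mathrm{span}(v_0,\ldots,v_k)$ coincide with the order sequence $(j_0,\ldots,j_n)$ and that the vectors at the jumps are independent; this is exactly where the characteristic-$p$ phenomenon enters, and is the reason Hasse derivatives rather than ordinary derivatives must be used, so it is the part that genuinely rests on \cite[Theorem 1.1]{SV}. A minor point still to verify is that nonsingularity of $P$ guarantees $v_0\neq 0$ (so that $j_0=0$) and that the local expansion above is legitimate.
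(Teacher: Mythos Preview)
Your argument is correct and is exactly the standard way to extract the statement from \cite[Theorem~1.1]{SV}. The paper in fact gives no proof at all: it merely announces the proposition as ``a consequence of \cite[Theorem~1.1]{SV}'' and moves on, so your write-up supplies precisely the missing derivation rather than offering an alternative route. One cosmetic point: the reason $v_0\neq 0$ (hence $j_0=0$) is not really nonsingularity of $P$ but the normalization $\min_\ell v_P(x_\ell)=0$ of the coordinate functions, which makes $(x_0(P):\cdots:x_n(P))$ the projective point $P$ itself; nonsingularity is used only to guarantee a local parameter and the validity of the Hasse--Taylor expansion.
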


Since $\xx$ is defined over $\fq$, i.e., $\xx$ is defined by coordinate functions $x_0,\ldots,x_n \in \fq(\xx)$, by \cite[Proposition 2.1]{SV}, there exist a sequence of non-negative integers $(\nu_0,\ldots,\nu_{n-1})$, chosen minimally in the lexicographic order, such that
\begin{equation}\label{fncl}
\left|
  \begin{array}{ccc}
  x_0^q & \ldots & x_n^q \\
  D_t^{(\nu_0)}x_0 & \ldots & D_t^{(\nu_0)}x_n \\
   \vdots & \cdots & \vdots \\
  D_t^{(\nu_{n-1})}x_0 & \cdots & D_t^{(\nu_{n-1})}x_n
  \end{array}
  \right| \neq 0.
  \end{equation}
This sequence is called $\fq$-Frobenius order sequence of $\xx$. It does not depend on $t$ and is invariant under change of projective coordinates of $\xx$ (see \cite[Proposition 1.4]{SV}). The curve $\xx$ is called $\fq$-Frobenius classical if $(\nu_0,\ldots,\nu_{n-1})=(0,\ldots,n-1)$, and $\fq$-Frobenius nonclassical otherwise. From \cite[Proposition 2.1]{SV}, there exists $I \in \{1, \ldots,n\}$ for which $\{\nu_0,\ldots,\nu_{n-1}\}=\{\varepsilon_0,\ldots, \varepsilon_n\} \backslash \{\varepsilon_I\}$.

Let $(\p^n(\fqc))^{'}$ be the dual projective space and consider the open set $$\xx^{\circ}=\{P \in \xx \ | \ P \text{ is nonsingular and } j_i(P)=\varepsilon_i , \ i=0,\ldots,n\}.$$ In our context, by a generic point of $\xx$ we mean a point $P \in \xx^{\circ}$. The strict Gauss map $\gamma:\xx^{\circ} \lra (\p^n(\fqc))^{'}$ is the map defined at a generic point $P \in \xx$ by $\gamma(P)=(H_P(\xx))^{'}$, where $L^{'}$ denotes the dual of a hyperplane $L$. The closure of the image of $\gamma$ is the strict dual of $\xx$, denoted by $\xx^{'}$. For simplicity, we will denote the map $\xx \lra \xx^{'}$ also by $\gamma$. Note that $\gamma$ is a morphism defined in the nonsingular locus of $\xx$. Here, we will abuse and call $\gamma$ a morphism of $\xx$. The separable (respectively inseparable) degree of $\gamma$ will be denoted by $\deg_s \gamma$ (respectively $\deg_i \gamma$). The following facts concerning the map $\gamma$ (some of them pointed out in the introduction) hold:
\begin{itemize}
\item $\deg_i \gamma$ is the highest power of $p$ dividing $\varepsilon_n$ (\cite[Theorem 1]{HK}).
\item If $\xx$ is classical and  $p>n$, then $\deg_i \gamma=\deg_s \gamma =1$, and $\xx=\xx^{''}$ (\cite{GV0}).
\item If $n=2$ and $\xx$ is $\F_{q^r}$-Frobenius nonclassical for some $r>0$, then $\varepsilon_2$ is a power of $p$, $\deg_s \gamma=1$ and $\deg_i \gamma = \varepsilon_2$ (\cite[Propositions 1, 3 and 4]{HV}).
\item If $d=\deg \xx$, then
\begin{equation}\label{trivialbound}
\deg_s \gamma \leq \frac{d}{\varepsilon_n}
\end{equation} Indeed, by definition, $\deg_s \gamma$ is the number of genereic points of $\xx$ sharing the same osculating hyperplane. Thus, \eqref{trivialbound} follows from B\'ezout's Theorem.
\end{itemize}

For a given $r>0$, the $\F_{p^r}$-Frobenius map $\Phi_{p^r}:\xx \lra \p^n(\fqc)$ is defined by $\Phi_{p^r}:P \mapsto P^{p^r}$. It induces a map from the function field $\fqc(\xx)$ onto $\fqc(\xx)^{p^r}=\{f^{p^r} \ | \ f \in \fqc(\xx)\}$. Therefore, $\Phi_{p^r}$ is purely inseparable of degree $p^r$. We point out that although $\Phi_q$ is purely inseparable, we have $\Phi_q: \xx \lra \xx$, as $\xx$ is defined over $\fq$.

Suppose that $p>n$ and $\varepsilon_n=p^r$ for some $r>0$. If $\xx$ is defined by the coordinate functions $x_0,\ldots,x_n$, then \cite[Theorem 7.65]{HKT} ensures that there exist $z_0,\ldots,z_n \in \fq(\xx)$ such that $z_i$ is a separating element for at least one $i$ and
\begin{equation}\label{e}
z_0^{p^r}x_0+\cdots+z_n^{p^r}x_n=0.
\end{equation}
Furthermore, for a generic point $P \in \xx$, the osculating hyperplane to $\xx$ at $P$ is defined by
$$
H_P(\xx):(z_0^{p^r}(P))X_0+\cdots+(z_n^{p^r}(P))X_n=0.
$$
Therefore, in such case we conclude that $\gamma=\Phi_{p^r} \circ \gamma_s=(z_0^{p^r}:\cdots:z_n^{p^r})$, where $\gamma_s=(z_0:\cdots:z_n)$ is a separable morphism. In particular, $\deg_s \gamma= \deg \gamma_s$ and $\deg_i \gamma =\deg \Phi_{p^r}=p^r$.

\section{Generalization of Theorem \ref{hefezvoloch} }

Let $\xx \subset \p^n(\fqc)$ a curve defined over $\fq$. Let $(\varepsilon_1,\ldots,\varepsilon_n)$ and $(\nu_0,\ldots,\nu_{n-1})$ denote the order sequence and $\fq$-Frobenius order sequence of $\xx$, respectively. Assume that $\nu_{n-1}= \varepsilon_n$. Then $(\nu_0,\ldots,\nu_{n-1})\neq (\varepsilon_0,\ldots,\varepsilon_{n-1})$, and so $\xx$ is $\fq$-Frobenius nonclassical. Moreover, from Proposition \ref{oschypeq} and \eqref{fncl}, the hypothesis $\nu_{n-1}= \varepsilon_n$ is equivalent to the following: $\Phi_q(P) \in H_P(\xx)$ for a generic point $P \in \xx$. Under such condition, we obtain a slight improvement of bound \eqref{trivialbound}.

\begin{prop}
Let $\xx \subset \p^n(\fqc)$ be a curve defined over $\fq$ with order sequence $(\varepsilon_0,\ldots,\varepsilon_n)$. Let $1 \leq \ell < n$ and suppose that $\xx$ is $\F_{q^i}$-Frobenius nonclassical for $i=1,\ldots,\ell$ such that the last term of the $\F_{q^i}$-Forbenius order sequence equals to $\varepsilon_n$. Then
\begin{equation}\label{impr}
\deg_s \gamma \leq \frac{d}{\varepsilon_n+\ell}.
\end{equation} 
\end{prop}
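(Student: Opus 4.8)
The plan is to refine the Bézout argument behind the trivial bound \eqref{trivialbound}, exploiting the extra intersection points supplied by the Frobenius images of the fibre. First I would translate the hypothesis: exactly as in the case $i=1$ discussed right before the statement (via Proposition \ref{oschypeq} and \eqref{fncl}), the condition that the last term of the $\F_{q^i}$-Frobenius order sequence equals $\varepsilon_n$ is equivalent to $\Phi_{q^i}(P)\in H_P(\xx)$ for a generic point $P$. Hence for generic $P$ one has $P^{q^i}\in H_P(\xx)$ for every $i=1,\dots,\ell$, and since $\xx$ is defined over $\fq$ each $P^{q^i}=\Phi_{q^i}(P)$ again lies on $\xx$. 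Now fix a generic point of $\xx^{'}$, i.e. a hyperplane $H$ that is the common osculating hyperplane $H=H_{Q_j}(\xx)$ of the generic points $Q_1,\dots,Q_s$ forming the corresponding fibre, where $s=\deg_s\gamma$ counts precisely these points, as recalled after \eqref{trivialbound}. By the translated hypothesis the $\ell$ points $Q_j^{q},\dots,Q_j^{q^\ell}$ all belong to $\xx\cap H$ as well, for each $j$.

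Next I would run B\'ezout on the divisor $\xx\cap H$, which has degree $d$. At each $Q_j$ the intersection multiplicity equals $\varepsilon_n$, since $H$ is the osculating hyperplane at the generic point $Q_j$, while each $Q_j^{q^i}$ with $1\le i\le\ell$ contributes at least $1$. Provided the $s(\ell+1)$ points $\{Q_j^{q^i}:1\le j\le s,\ 0\le i\le\ell\}$ are pairwise distinct, summing these contributions gives
\begin{equation*}
d=\deg(\xx\cap H)\ \ge\ \sum_{j=1}^{s}\big(\varepsilon_n+\ell\big)=s\,(\varepsilon_n+\ell),
\end{equation*}
which is exactly the desired bound \eqref{impr}.

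The crux, and the step I expect to be the main obstacle, is establishing this distinctness for a generic $H$. A collision $Q_j^{q^a}=Q_k^{q^b}$ with $a\ge b$ is equivalent, by injectivity of Frobenius on $\fqc$-points, to $Q_k=Q_j^{q^c}$ with $c=a-b\in\{0,\dots,\ell\}$. The case $c=0$ forces $j=k$ and is no collision. If $c\ge1$ and $j=k$, then $Q_j=Q_j^{q^c}$, i.e. $Q_j\in\xx(\F_{q^c})$, which holds only for the finitely many $\F_{q^c}$-rational points of $\xx$ and is excluded once each $Q_j$ is chosen generically. If $c\ge1$ and $j\neq k$, then $Q_k$ being in the fibre gives $\gamma(Q_j^{q^c})=\gamma(Q_k)=\gamma(Q_j)$; I would rule this out by showing $\gamma\circ\Phi_{q^c}\neq\gamma$ as morphisms, so that the agreement locus $\{P:\gamma(P^{q^c})=\gamma(P)\}$ is a proper, hence finite, closed subset of $\xx$ which a generic $Q_j$ avoids. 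The inequality $\gamma\circ\Phi_{q^c}\neq\gamma$ is immediate on function fields: were they equal, then $(\gamma^{*}g)^{q^c}=\gamma^{*}g$ for every $g\in\fqc(\xx^{'})$, forcing each pulled-back function $\gamma^{*}g$ to be constant, contradicting the dominance of $\gamma$.

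Finally, since there are only finitely many triples $(c,j,k)$ and each excluded locus is a proper closed (indeed finite) subset, a generic $H$ realizes all $s(\ell+1)$ points as distinct, and the B\'ezout count above yields \eqref{impr}. The only delicate point is the genericity bookkeeping of the previous paragraph, namely checking that as $H$ ranges over a generic subset of $\xx^{'}$ each branch $Q_j$ sweeps a dense subset of $\xx$, so that the morphism-level statement $\gamma\circ\Phi_{q^c}\neq\gamma$ genuinely forbids the cross-collisions at a generic $H$; everything else is the standard osculating-multiplicity input combined with B\'ezout's Theorem.
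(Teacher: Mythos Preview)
Your argument is correct and coincides with the paper's: both add the Frobenius images $Q_j^{q},\dots,Q_j^{q^\ell}$ of each fibre point to $\xx\cap H$, verify that the resulting $s(\ell+1)$ points are pairwise distinct for generic $H$, and then invoke B\'ezout to obtain $d\ge s(\varepsilon_n+\ell)$. The only difference is cosmetic, in the distinctness step: where you rule out cross-collisions via the function-field inequality $\gamma\circ\Phi_{q^c}\neq\gamma$, the paper uses the equivalent geometric observation that $\gamma\circ\Phi_{q^c}=\Phi_{q^c}\circ\gamma$ (since $\xx$ is defined over $\fq$), so a coincidence $\Phi_{q^i}(Q_j)=\Phi_{q^{i'}}(Q_k)$ with $i<i'$ would make $H=H_P(\xx)$ fixed by $\Phi_{q^{i'-i}}$, which fails for generic $P$ because $\xx^{'}$ has only finitely many $\F_{q^{i'-i}}$-rational points.
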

\begin{proof}
Given a generic point $P \in \xx$ with osculating hyperplane $H_P(\xx)$, we have that for a fixed $i$, the osculating hyperplane at $\Phi_{q^i}(P)$ is $\Phi_{q^i}(H_P(\xx))$. Note that, since $P$ is generic, its osculating hyperplane is not $\F_{q^i}$-rational, and then $\Phi_{q^i}(H_P(\xx)) \neq H_P(\xx)$. Hence, to each generic point sharing the same osculating hyperplane $H_P(\xx)$, we have an extra point in $H_P(\xx) \cap \xx$ for which $H_P(\xx)$ is not the osculating hyperplane, namely $\Phi_{q^i}(P)$. Since $\Phi_{q^i}$ is injective, all these points are distinct. Finally, if $Q \in \xx$ is such that $H_P(\xx)$ is osculating to $\xx$ at $Q$, then $\Phi_{q^i}(P)\neq \Phi_{q^j}(Q)$ if $i \neq j$, with $1 \leq i<j \leq \ell$. Indeed, if $\Phi_{q^i}(P)= \Phi_{q^j}(Q)$, then $\Phi_{q^{j-i}}(Q)= P$, and so $H_P(\xx)$ is fixed by $\Phi_{q^{j-i}}$, a contradiction as $P \in \xx$ is generic. Therefore \eqref{impr} follows from the B\'ezout's Theorem.
\end{proof}

However, note that the result in Theorem \ref{hefezvoloch} does not depend on the degree of the curve. In fact, it only depends on the dimension of the ambient projective space. In what follows, $\lceil m \rceil$ denotes the smallest integer bigger or equal to the rational number $m$.

\begin{thm}\label{main}
Let $\xx \subset \p^n(\fqc)$ be a curve defined over $\fq$ with order sequence $(\varepsilon_0,\ldots,\varepsilon_n)$. Let $1 \leq \ell < n$ and suppose that $\xx$ is $\F_{q^i}$-Frobenius nonclassical for $i=1,\ldots,\ell$ such that the last term of the $\F_{q^i}$-Forbenius order sequence equals to $\varepsilon_n$. If
$$
\deg_s \gamma > \left\lceil \frac{n}{\ell}-1 \right\rceil,
$$
then for a generic $P \in \xx$, the set $S(P)=\{\Phi_{q^k}(Q) \ | \ Q \in \gamma^{-1}(\gamma(P)), \  k=0,\ldots,\ell-1\}$ lies in a linear subspace of $\p^n(\fqc)$ of dimension $n-2$.
\end{thm}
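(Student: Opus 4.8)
The plan is to produce two \emph{distinct} hyperplanes of $\p^n(\fqc)$ that both contain the entire set $S(P)$. Since two distinct hyperplanes of $\p^n(\fqc)$ meet in a linear subspace of dimension $n-2$, this yields the assertion at once. The two hyperplanes will be $H:=H_P(\xx)$ itself and its ``Frobenius root'' $\tilde H$, defined as the unique hyperplane with $\Phi_q(\tilde H)=H$ (concretely, if $H$ is given by $\sum a_iX_i=0$ then $\tilde H$ is given by $\sum a_i^{1/q}X_i=0$; since $\Phi_q$ is a bijection of $\p^n(\fqc)$ we have $R\in\tilde H \Leftrightarrow \Phi_q(R)\in H$).

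First I would fix a generic $P$ and assemble the standing facts. Write $\gamma^{-1}(\gamma(P))=\{Q_1,\dots,Q_s\}$ with $s=\deg_s\gamma$. Each $Q_j$ has osculating hyperplane $H_{Q_j}(\xx)=H_P(\xx)=H$, because $\gamma(Q_j)=\gamma(P)$ means $(H_{Q_j}(\xx))'=(H_P(\xx))'$. Moreover, as the non-generic points of $\xx$ form a finite set, so do their images under $\gamma$; choosing $P$ generic so that $\gamma(P)$ avoids this finite set forces every $Q_j$ to be generic as well. Hence the $\F_{q^i}$-Frobenius nonclassicality hypothesis applies at each $Q_j$ and yields $\Phi_{q^i}(Q_j)\in H_{Q_j}(\xx)=H$ for $i=1,\dots,\ell$ (and trivially for $i=0$). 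Finally, exactly as in the proof of the preceding Proposition, genericity of $P$ makes $H$ non-$\fq$-rational: the $\fq$-rational points of the curve $\xx^{'}$ form a finite set, and $P$ avoids its finite preimage. Consequently $\Phi_q(H)\neq H$, equivalently $\tilde H\neq H$.

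The heart of the argument is then a double containment: $S(P)\subseteq H$ and $\Phi_q(S(P))\subseteq H$. A general element of $S(P)$ is $\Phi_{q^t}(Q_j)$ with $0\le t\le \ell-1$. It lies in $H$ because $\Phi_{q^t}(Q_j)\in H_{Q_j}(\xx)=H$, using $0\le t\le \ell$; and its Frobenius image $\Phi_q(\Phi_{q^t}(Q_j))=\Phi_{q^{t+1}}(Q_j)$ lies in $H$ because $1\le t+1\le \ell$. The second containment is precisely $S(P)\subseteq \Phi_q^{-1}(H)=\tilde H$. Therefore $S(P)\subseteq H\cap\tilde H$, a linear subspace of dimension $n-2$ since $H\neq\tilde H$, which is the claim. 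The range $k=0,\dots,\ell-1$ in the definition of $S(P)$ is exactly what makes this tight: enlarging it to include $k=\ell$ would require $\Phi_{q^{\ell+1}}(Q_j)\in H$, which the hypotheses do not supply.

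The only delicate point — the true obstacle — is the genericity bookkeeping isolated in the second paragraph: guaranteeing that \emph{every} point of the fiber $\gamma^{-1}(\gamma(P))$ is generic (so that the nonclassicality condition is available there) and that $H$ is not $\fq$-rational (so that the two hyperplanes are genuinely distinct). Everything else is elementary incidence bookkeeping together with the semilinearity of $\Phi_q$. I would close by observing that the hypothesis $\deg_s\gamma>\lceil n/\ell-1\rceil$ is exactly the regime $s\ell\ge n$, i.e.\ the one in which the conclusion is \emph{not} already forced by the crude cardinality bound $|S(P)|\le s\ell$; the containment $S(P)\subseteq H\cap\tilde H$ itself holds in every regime, but only becomes informative here.
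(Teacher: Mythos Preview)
Your proof is correct and rests on the same core observation as the paper's: for every $Q\in\gamma^{-1}(\gamma(P))$ one has both $\Phi_{q^k}(Q)\in H$ for $k=0,\dots,\ell-1$ and $\Phi_{q^{k+1}}(Q)\in H$ for the same range, while $H$ is not $\Phi_q$-stable. The paper packages this as a contradiction argument (if $S(P)$ spanned $H$, then $\Phi_q(S(P))\subseteq H$ would force $\Phi_q(H)=H$), whereas you phrase it constructively by naming the second hyperplane $\tilde H=\Phi_q^{-1}(H)$ and exhibiting $S(P)\subseteq H\cap\tilde H$ directly. The two are contrapositives of each other.

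Your direct formulation has one small payoff worth recording: it makes transparent that the containment $S(P)\subseteq H\cap\tilde H$ holds for \emph{every} generic $P$, with no appeal to the bound on $\deg_s\gamma$. The paper's contradiction route invokes $\deg_s\gamma>\lceil n/\ell-1\rceil$ to ensure $|S(P)|\ge n$ before arguing that $S(P)$ spans $H$, but in fact this cardinality step is redundant (a set not contained in any $(n-2)$-dimensional subspace of $H$ spans $H$ regardless of its size). Your closing remark correctly identifies the hypothesis as delimiting the regime where the conclusion is nontrivial rather than as an ingredient of the proof.
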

\begin{proof}
Suppose that $\deg_s \gamma >  \left\lceil \frac{n}{\ell}-1 \right\rceil$. Let $P \in \xx$ be a generic point and assume that $S(P)$ is not in a linear subspace of $\p^n(\fqc)$ of dimension $n-2$. The hypothesis on the $\F_{q^i}$-Frobenius order sequence gives that $\Phi_{q^i}(P) \in H_P(\xx)$ for all $i=1,\ldots ,\ell$. Since $\deg_s \gamma  > \left\lceil \frac{n}{\ell}-1 \right\rceil$, there are at least $\lceil n/\ell \rceil$ points sharing the same osculating hyperplane $H_P(\xx)$. Then, as $S(P) \subset H_P(\xx)$ and it is not in a linear subspace of dimension $n-2$, we have for sure $n$ points of $S(P)$ in general position in $H_P(\xx)$, and $H_P(\xx)$ is completely determined by these points.  Hence $\Phi_q(H_P(\xx))$ is completely determined by the set $\{\Phi_q(R) \ | \ R \in S(P)\}$. But if $R \in S(P)$, we have $R=\Phi_{q^k}(Q)$ for some $Q \in \gamma^{-1}(\gamma(P))$ and $k=0,\ldots,\ell-1$, thus $\Phi_q(R)=\Phi_{q^{k+1}}(Q) \in H_P(\xx)$. Therefore, $\Phi_q(H_P(\xx))=H_P(\xx)$. Since $P$ is generic, this implies that $\Phi_q$ fixes infinitely many points in $\xx^{'}$, that is, $\xx^{'}$ has infinitely many $\fq$-rational points, which is a contradiction. 
\end{proof}

In particular, for $\ell=1$ we have the following.

\begin{thm}\label{gen1}
Let $\xx \subset \p^n(\fqc)$ be an $\fq$-Frobenius nonclassical curve defined over $\fq$ such that $\nu_{n-1}=\varepsilon_n$. If $\deg_s \gamma \geq n$ then the generic pre-image of $\gamma$ lies in a linear subspace of $\p^n(\fqc)$ of dimension $n-2$.
\end{thm}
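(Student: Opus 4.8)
The plan is to recognize that this statement is nothing more than the specialization $\ell = 1$ of Theorem \ref{main}, so that the proof amounts to checking that each ingredient of \ref{main} degenerates correctly when $\ell$ is set equal to $1$. First I would verify the numerics: with $\ell = 1$ one has $\lceil n/\ell - 1 \rceil = \lceil n - 1 \rceil = n-1$, so the hypothesis $\deg_s \gamma > \lceil n/\ell - 1\rceil$ of \ref{main} becomes $\deg_s \gamma > n-1$, i.e. $\deg_s \gamma \geq n$, which is exactly the hypothesis here. Next, the standing assumption of \ref{main} that $\xx$ be $\F_{q^i}$-Frobenius nonclassical with the last Frobenius order equal to $\varepsilon_n$ for $i = 1, \ldots, \ell$ reduces, when $\ell = 1$, to the single requirement that $\xx$ be $\fq$-Frobenius nonclassical with $\nu_{n-1} = \varepsilon_n$. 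Finally, in the set $S(P) = \{\Phi_{q^k}(Q) : Q \in \gamma^{-1}(\gamma(P)),\ k = 0, \ldots, \ell-1\}$ only the index $k = 0$ survives, and since $\Phi_{q^0} = \mathrm{id}$ we get $S(P) = \gamma^{-1}(\gamma(P))$, the generic pre-image of $\gamma$. Thus the conclusion of \ref{main} says precisely that this pre-image lies in a linear subspace of dimension $n-2$, which is the assertion to be proved. Hence the one-line proof I would give is: apply Theorem \ref{main} with $\ell = 1$.

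For completeness, and to make the mechanism transparent in this simplest case, I would also unwind the argument directly. Fix a generic $P \in \xx$ and argue by contradiction, assuming the fiber $\gamma^{-1}(\gamma(P))$ is not contained in any linear subspace of dimension $n-2$. Because $\deg_s \gamma \geq n$, this fiber consists of at least $n$ generic points, all sharing the osculating hyperplane $H_P(\xx)$; not being confined to an $(n-2)$-dimensional subspace then forces $n$ of them to be in general position inside the $(n-1)$-dimensional hyperplane $H_P(\xx)$, so that these $n$ points determine $H_P(\xx)$ uniquely.

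The final step uses the hypothesis $\nu_{n-1} = \varepsilon_n$, which (as noted before \ref{main}) means $\Phi_q(R) \in H_R(\xx)$ for every generic $R$. Since each $Q$ in the fiber satisfies $H_Q(\xx) = H_P(\xx)$, we get $\Phi_q(Q) \in H_P(\xx)$, so applying $\Phi_q$ keeps the $n$ spanning points inside $H_P(\xx)$; as they determine $H_P(\xx)$, this yields $\Phi_q(H_P(\xx)) = H_P(\xx)$, i.e. $\gamma(P)$ is an $\fq$-rational point of $\xx^{'}$. Letting $P$ range over the generic locus produces infinitely many $\fq$-rational points on the curve $\xx^{'}$, which is impossible, giving the contradiction. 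I do not expect a genuine obstacle here: the substance is entirely contained in Theorem \ref{main}, and the only points to check are the arithmetic of the ceiling and the collapse $S(P) = \gamma^{-1}(\gamma(P))$, both routine.
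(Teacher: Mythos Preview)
Your proposal is correct and matches the paper exactly: the paper presents Theorem \ref{gen1} as the particular case $\ell=1$ of Theorem \ref{main} with no separate proof, and your verification of the ceiling arithmetic and the collapse $S(P)=\gamma^{-1}(\gamma(P))$ is precisely what is needed. The direct unwinding you give is also faithful to the proof of Theorem \ref{main} specialized to $\ell=1$.
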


\begin{rem}\label{obsgen}
If $n=2$, then the hypothesis $\nu_{n-1}=\varepsilon_n$ is equivalent to $\xx$ being $\fq$-Frobenius nonclassical. Moreover, a linear subspace in $\p^2(\fqc)$ of dimension $n-2$ is just a point. Hence, we conclude from Theorem \ref{gen1} that $\deg_s \gamma=1$. In other words, we obtain the result stated in Theorem \ref{hefezvoloch}.
\end{rem}

To illustrate Theorem \ref{gen1}, we now show how to construct examples of curves in $\p^3(\fqc)$ with $\deg_s \gamma = k$, with $k$ prime to $p$, such that the points in a generic pre-image of $\gamma$ are collinear. Let $\ff:z_0^{p^r}x+z_1^{p^r}y+z_2^{p^r}=0$ be a plane $\fq$-Frobenius nonclassical curve with order sequence $(0,1,p^r)$, where $p^r \leq q$, $z_0,z_1,z_2 \in \fq(\ff)=\fq(x,y)$ and $x$ is a separating variable. Then
$$
D_x^{(1)}(y)=\frac{y-y^q}{x-x^q}=-\frac{z_0^{p^r}}{z_1^{p^r}},
$$ 
and $D_x^{(l)}(y)=0$ for $1<l<p^r$. For $k>1$ prime to $p$, let $f \in \fqc(x,y)$ such that $f \neq w^d$ for all $w \in \fq(x,y)$ and $d>1$ divisor of $k$ (for instance, we may chose $f=1/t$, where $t$ is a local parameter at some nonsingular point $P \in \ff$). Suppose that $z$ is a root of $T^k-f \in \fqc(x,y)[T]$ such that $D_x^{(2)}(z) \neq  0$ (note that $x$ is also separating in $\fqc(x,y,z)$). In particular, 
$$
D^{(1)}_x(z) \neq \frac{z-z^q}{x-x^q}.
$$
Consider $\xx=\phi(\ff)\subset \p^3(\fqc)$, where $\phi=(1:x:y:z)$. Since

\begin{equation}
\left|
\begin{array}{cccc}
  1 & x & y & z  \\
  0 & 1 & D_x^{(1)}(y)& D_x^{(1)}(z) \\
	0 & 0 & 0& D_x^{(2)}(z) \\
	0 & 0 & D_x^{(i)}(y)& D_x^{(i)}(z) 
  \end{array}
  \right|=-D_x^{(2)}(z)\cdot D_x^{(i)}(y),
\end{equation}
for $i>1$, we conclude that the order sequence of $\xx$ is $(0,1,2,p^r)$. Futhermore, we have $\nu_2=\varepsilon_3$, since for $j>1$ the equality

\begin{equation}
\left|
\begin{array}{cccc}
  1 & x^q & y^q & z^q  \\
	1 & x & y & z \\
  0 & 1 & D_x^{(1)}(y)& D_x^{(1)}(z) \\
	0 & 0 & D_x^{(j)}(y)& D_x^{(j)}(z) 
  \end{array}
  \right|=D_x^{(j)}(y)\cdot \big(D_x^{(j)}(z)(x-x^q)-(z-z^q)\big)
\end{equation}
gives that the $\fq$-Frobenius order sequence of $\xx$ is $(0,1,p^r)$. Therefore, we conclude from Proposition \ref{oschypeq} that the Gauss map of $\xx$ is given by $\gamma=(z_2^{p^r}: z_0^{p^r}:z_1^{p^r}:0 )$. Since $\ff$ is $\fq$-Frobenius nonclassical, we have that the morphism $(z_0:z_1:z_2)$ is birational, and so $\gamma(\xx)=\xx^{'}$ is birationally equivalent to $\ff$ up to the $\Phi_{p^r}$-Frobenius map. In particular, $\deg_s \gamma=k$. Finally, if $\lambda \in \fqc$ denotes a $k$-primitive root of unity, we clearly have that the points $(1:a:b:\lambda^j c) \in \xx$ with $j=0,\ldots,k-1$ have the same osculating hyperplane and are collinear. 

\begin{ex}
Let $q$ be an even power of $p>2$ and set $\ff:x^{\sqrt{q}+1}+y^{\sqrt{q}+1}+1=0$ defined over $\F_q$ (Hermitian curve over $\F_q$). It is well known that $\ff$ is $\fq$-Frobenius nonclassical, see \cite[Theorem 2]{GV2} for instance. For $k>1$ prime to $p$, the function $f=\frac{x^2}{y}$ is such that $f \neq w^d$ for all $w \in \fqc(x,y)$ and $d$ divisor of $k$, as there exists a place $Q$ of $\fqc(x,y)$ such that $v_Q(f)=-1$ (\cite[Corollary 3.7.4]{St}). Let $z$ be such that $z^k=\frac{x^2}{y}$. One can show that $D_x^{(2)}(z) \neq 0$. Hence, by the discussion above, $\phi(\ff)\subset \p^3(\fqc)$, where $\phi=(1:x:y:z)$ is such that $\varepsilon_3=\nu_2=\sqrt{q}$, $\deg_s \gamma=k$ and the generic points of $\xx$ sharing the same osculating hyperplane are collinear.
\end{ex}

One clearly has that the result of Theorem \ref{gen1} does not hold without the assumption $\nu_{n-1}=\varepsilon_n$. For instance, as discussed previously in this paper, it is known that if a plane curve $\xx$ is nonsingular and $\deg_s \gamma>1$, then $\xx$ is Frobenius classical. For sake of completeness, we will present the following example for curves in higher dimensional spaces. It is based on \cite[Example 5]{GV0}.

\begin{ex}
Consider a  curve $\ff$ defined by $x^6=f(y)$ over $\F_p$, where $f(T) \in \F_p[T]$, with $p > 3$ and set $\xx=\psi(\ff) \subset \p^3(\overline{\F}_p)$, where $\psi=(1:x:x^2:x^{3p}+y^px+x^{6p+2})$. Then $\gamma=(x^{3p}:y^p:x^{6p}:-1)$, and so $\deg_s \gamma=3$. Moreover, a generic pre-image of $\gamma$ is not collinear. Indeed, let $P_1=(1:a:a^2:a^{3p}+b^pa+a^{6p+2}) \in \xx$. Then $P_2=(1:\xi a:(\xi a)^2:a^{3p}+b^p(\xi a)+(\xi a)^{6p+2})$ and $P_3=(1:\xi^2 a:(\xi^2 a)^2:a^{3p}+b^p(\xi^2 a)+(\xi^2 a)^{6p+2})$ have the same image $(a^{3p}:b^p:a^{6p}:-1)$ under $\gamma$, where $\xi$ is a primitive cubic root of $1$. However, one can easily check that such points are not collinear.
\end{ex}

We can also specialize Theorem \ref{main} in the case $\ell=n-1$.

\begin{thm}\label{n-1}
Let $\xx \subset \p^n(\fqc)$ be a curve defined over $\fq$ with order sequence $(\varepsilon_0,\ldots,\varepsilon_n)$. Suppose that $\xx$ is $\F_{q^i}$-Frobenius nonclassical for $i=1,\ldots,n-1$ such that the last term of the $\F_{q^i}$-Forbenius order sequence equals to $\varepsilon_n$. Then $\deg_s \gamma=1$, that is, the Gauss map $\gamma$ is purely inseparable.
\end{thm}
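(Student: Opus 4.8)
The plan is to argue by contradiction: I assume $\deg_s \gamma \geq 2$ and aim to conclude that $\xx$ is degenerate, which is impossible since a curve whose order sequence $(\varepsilon_0,\ldots,\varepsilon_n)$ has $n+1$ terms spans $\p^n(\fqc)$. Note that for $\ell=n-1$ one has $\lceil n/(n-1)-1\rceil=1$, so Theorem \ref{main} already places the set $S(P)$ in a subspace of dimension $n-2$; the decisive extra ingredient I must supply is that $P$ lies in a \emph{proper $\fq$-rational} subspace, after which the finiteness argument of Theorem \ref{main} closes the proof.

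First I would fix a generic $P\in\xx$ with fiber $\gamma^{-1}(\gamma(P))=\{Q_1,\ldots,Q_m\}$, where $m=\deg_s\gamma\geq 2$, all sharing the osculating hyperplane $H:=H_P(\xx)$ with coefficient vector $\xi=(a_0,\ldots,a_n)$. As in the discussion preceding Proposition 3.1, the hypothesis that the last term of each $\F_{q^i}$-Frobenius order sequence equals $\varepsilon_n$ gives $\Phi_{q^i}(Q_j)\in H$ for $i=0,1,\ldots,n-1$ and every $j$. Writing $v_j$ for homogeneous coordinates of $Q_j$, this reads $\sum_l a_l v_{j,l}^{q^i}=0$, equivalently $\xi^{q^{-i}}(v_j)=0$ (using that $\fqc$ is perfect), for $i=0,\ldots,n-1$. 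Thus each $Q_j$ lies in the common zero locus of the $n$ Frobenius-twisted forms $\xi,\xi^{q^{-1}},\ldots,\xi^{q^{-(n-1)}}$.

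The crux is the following dichotomy. If these $n$ forms were $\fqc$-linearly independent, their common zero locus in $\p^n(\fqc)$ would be a single point, forcing $Q_1=\cdots=Q_m$ and hence $m=1$, a contradiction. Therefore the twists are dependent, and a short linear-algebra argument shows that the span $W$ of \emph{all} twists $\{\xi^{q^i}\}_{i\in\zz}$ is Frobenius-stable, hence $\fq$-rational, of dimension $s\leq n-1$. Since any $s$ consecutive twists form a basis of $W$, the $n$ available forms $\xi,\ldots,\xi^{q^{-(n-1)}}$ already span $W$, so $v_j$ annihilates all of $W$; equivalently $Q_j\in\Pi:=\p(W^{\perp})$, an $\fq$-rational linear subspace of dimension $n-s\leq n-1<n$. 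In particular $P\in\Pi$, and $\Pi$ depends only on $\gamma(P)$.

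To finish I would invoke the finiteness used in Theorem \ref{main}: there are only finitely many $\fq$-rational linear subspaces of $\p^n(\fqc)$ of dimension at most $n-1$, yet every generic $P$ lies in one such $\Pi(P)$. Hence some fixed proper $\fq$-rational subspace $\Pi_0$ contains infinitely many generic points of $\xx$, so $\xx\subseteq\Pi_0$ and $\xx$ is degenerate, the desired contradiction. I expect the main obstacle to be the middle step: extracting Frobenius-stability (and thus $\fq$-rationality) of the span $W$ from the mere dependence of the twisted forms, and verifying that the fiber genuinely lands in the annihilator $W^{\perp}$ — this is precisely where one uses that all indices $i=1,\ldots,n-1$ are available, i.e.\ the full force of $\ell=n-1$. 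I would also emphasize the subtlety that the argument is carried out in the \emph{primal} space $\p^n(\fqc)$, which yields degeneracy of $\xx$ itself rather than the weaker (and inconclusive) statement that $\xx^{'}$ is degenerate or that $\xx$ is strange.
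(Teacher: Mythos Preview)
Your argument is correct and reaches the same contradiction as the paper --- a generic point lies in a proper $\fq$-rational linear subspace, hence $\xx$ is degenerate --- but the route is genuinely different. The paper works entirely in the primal space: it invokes Theorem~\ref{main} to bound the dimension of the span $L$ of $\{\Phi_{q^k}(P),\Phi_{q^k}(Q):0\le k\le n-2\}$ by $n-2$, and then runs a case analysis ($d<n-2$ versus $d=n-2$, with inner inductions) to force $L$ (or a subspace of it) to be $\Phi_q$-stable. You instead pass to the dual: the $n$ Frobenius twists $\xi,\xi^{q^{-1}},\ldots,\xi^{q^{-(n-1)}}$ of the osculating form either are independent (killing the fiber to a point) or are dependent, in which case the standard ``first linear relation among consecutive twists'' argument shows their full span $W$ is $\Phi_q$-stable of dimension $\le n-1$, and the fiber lies in $W^\perp$. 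Your approach is cleaner: it replaces the paper's case analysis by a single linear-algebra lemma, and it makes transparent exactly where the full range $i=1,\ldots,n-1$ is used (namely, to guarantee that the $n$ available twists already span $W$). The paper's approach, on the other hand, stays closer to the geometry of the points and reuses Theorem~\ref{main} directly. The only point you should make explicit when writing this out is the lemma behind ``any $s$ consecutive twists form a basis'': if $s$ is minimal with $\xi^{q^{-s}}=\sum_{i<s}c_i\xi^{q^{-i}}$, then $c_0\neq 0$ (else applying $\Phi_q$ contradicts minimality), which is what gives $\Phi_q$-stability of $W$ in both directions.
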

\begin{proof}
The case $n=2$ is discussed in Remark \ref{obsgen}. Assume that $n>2$ and $\deg_s \gamma >1$. Let $P \in \xx$ be a generic point and let $Q \in \xx$ such that $P \neq Q$ and $H_P(\xx)=H_Q(\xx)$. Denote by $d$ the dimension of the linear subspace $L \subset \p^n(\fqc)$ defined by the set $\{P,\Phi_q(P),\ldots,\Phi_{q^{n-2}}(P),Q,\Phi_q(Q),\ldots,\Phi_{q^{n-2}}(Q)\}$. By Theorem \ref{main}, one has $d \leq n-2$. Suppose that $d<n-2$. Then the set $A=\{P,\Phi_q(P),\ldots,\Phi_{q^{n-2}}(P)\}$ determines a linear subspace $\tilde{L}$ of dimension $k \leq d < n-2$ and $\Phi_q(R) \in A \subset \tilde{L}$ for all $R=\phi_{q^j}(P)$, $j=0,\ldots,n-3$. Note that since $A$ has $n-1$ points, we have a redundancy of points to generate $\tilde{L}$. If $\tilde{L}$ is defined by the set $A \backslash\{\Phi_{q^{n-2}}(P)\}$, then  $\tilde{L}$ is fixed by $\Phi_q$, which gives that there are infinitely many linear subspaces of degree $k$ defined over $\fq$, a contradiction. If $\tilde{L}$ is not defined by $A \backslash\{\Phi_{q^{n-2}}(P)\}$, then the subspace determined by such set of points has dimension smaller than $k$, and by induction we arrive again at a contradiction.

Suppose then $d=n-2$. In this case, we may assume that $L$ is defined by the set $A$, because otherwise $A$ would determine a linear subspace of dimension smaller then $n-2$ and we would obtain a contradiction proceeding as above. If $B=\{Q,\Phi_q(Q),\ldots,\Phi_{q^{n-3}}(Q)\}$ is not contained in the linear subspace determined by $C=\{P,\Phi_q(P),\ldots,\Phi_{q^{n-3}}(P)\}$, then there exists  $j \in \{0,\ldots,n-3\}$ such that $L$ is defined by $C \cup \{\Phi_{q^j}(Q)\}$. Therefore, $L$ is fixed by $\Phi_q$, which is a contradiction. If $B$ is contained in the the linear subspace determined by $C$, an inductive argument as used previously also gives rise to a contradiction. Hence $\deg_s \gamma =1$, as claimed.
\end{proof}

\begin{ex}
Suppose $p>3$ and consider the irreducible plane curve $\ff$ defined by the affine equation
$$
y^q-y=\frac{1-x^{q-1}}{1+x^{q-1}}
$$ over $\fq$. Then $x$ is a separating element of $\fq(\ff)=\fq(x,y)$. Let $z$ be a root of the polynomial
$$
T-T^q+(x-x^q)x+(y-y^q)x^2 \in \fq(x,y)[T]
$$ 
and denote by $\cc$ the curve defined over $\fq$ such that $\fq(\cc)=\fq(x,y,z)$. Note that $x$ is separating in $\fq(\cc)$. Thus the following equations are satisfied:
\[\begin{cases}
(z-z^q)+(x-x^q)x+(y-y^q)x^2   & =0, \\
(z-z^{q^2})+(x-x^{q^2})x^q+(y-y^{q^2})x^{2q}   & =0. 
\end{cases}\]
Finally, let $\psi=(1:x:x^2:z^q+x^{q+1}+y^qx^2):\cc \lra \p^3(\fqc)$ and define $\xx=\psi(\cc)$. The order sequence of $\xx$ is $(0,1,2,q)$. By the equations above we conclude that the $\fq$ and $\F_{q^2}$-Frobenius order sequences are the same, namely $(0,1,q)$. Therefore $\xx$ satisfies the hypotheses of Theorem \ref{n-1}.
\end{ex}

We end this section by pointing out that if we do not require that $i=1,\ldots,n-1$ in Theorem \ref{n-1}, then the result does not necessarily holds. To this end, we present two examples of curves in $\p^3(\fqc)$ (based on the construction given in page 6) satisfying the hypothesis of Theorem \ref{n-1} except that this time we do not have $i=1,2$, such that one has purely inseparable Gauss map and the other does not. 

\begin{ex}
Suppose $p>3$. Let $u,m$ be co-prime positive integers with $m >2$ and $m>u$, and consider the Borges curve defined over $\fq$ by $\ff:f(x,y)=0$, where
$$
f(x,y)=\frac{(x^{q^u}-x)(y^{q^m}-y)-(x^{q^m}-x)(y^{q^u}-y)}{(x^{q^2}-x)(y^{q}-y)-(x^{q}-x)(y^{q^2}-y)}.
$$
This curve was introduced in \cite{Bo2}, and it was proved that it is the only simultaneously $\F_{q^m}$- and  $\F_{q^u}$-Frobenius nonclassical curve for the morphism of lines (\cite[Theorem 1.1]{Bo2}), and its order sequence is $(0,1,q^u)$. 


 Let $\xx=\psi(\ff)$, where $\psi=(1:x:y:xy)$. Since $D_x^{(2)}(xy)=y+xD_x^{(1)}(y) \neq 0$, we have that 
 the order sequence of $\xx$ is $(0,1,2,q^u)$ and the $\F_{q^r}$-Frobenius order sequence is $(0,1,q^u)$ for $r=u,m$. 
  Thus $\gamma$ is purely inseparable, as $\fqc(\xx)=\fqc(\ff)$ (recall that in such situation we have $\fqc(\gamma_s(\xx))=\fqc(\gamma_s(\ff))=\fqc(\ff)$).

Now let $P=(a:b:1) \in \ff$ be a generic point such that $x-a$ is a local parameter at $P$ (is not so difficult to check that there exists such point). Let $k>1$ be an integer such that $p \nmid k(k+1)$. Set $w=\frac{1}{x-a}$, and let $z$ be a root of $T^k-w$. Define $\varphi=(1:x:y:z): \ff \lra \p^3(\fqc)$ and set $\yy=\varphi(\ff)$. Thus $[\fqc(\yy):\fqc(\ff)]=k$. The condition $p \nmid k+1$ ensures that $D_x^{(2)}(z) \neq 0$. Hence the order sequence  of $\yy$ is $(0,1,2,q^u)$, and $\yy$ is $\F_{q^r}$-Frobenius nonclassical, for $r=u,m$. However, one has $\deg_s \gamma=k>1$. 
\end{ex}

\section{Strict dual curves covered by a quotient curve}

Let $\xx \subset \p^n(\fqc)$ be a curve defined over $\fq$ equipped with the Frobenius map $\Phi_q$. In Section 3 we took advantage of the fact that $\Phi_q$ induces a Frobenius map on $\xx^{'}$ such that $\gamma \circ \Phi_q=\Phi_q \circ \gamma$, to bound $\deg_s \gamma$. In this section, we use a similar approach for linear automorphisms. 

Let $(\varepsilon_0,\ldots,\varepsilon_n)$ the order sequence of $\xx$ and assume that $\varepsilon_n=p^r$, where $n>p$. Recall that in such case, $\gamma=\Phi_{p^r}\circ \gamma_s$, and the extension $\fqc(\xx) / \fqc(\gamma_s(\xx))$ is separable. In this section, we will investigate what happens if there exists an intermediate field $\fqc(\gamma_s(\xx)) \subseteq \mathbb{F} \subsetneq \fqc(\xx)$ such that the extension $\fqc(\xx)/\F$ is  Galois defined by a linear automorphism group $G$, specially when $\xx$ is a plane curve. In what follows, $\aut(\xx)$ will denote the full automorphism group of $\xx$.

\begin{prop}\label{degen}
Let $\xx \subset \p^n(\fqc)$ defined over $\fq$ such that $\varepsilon_n=p^r$, where $n>p$. If $\gamma_s(\xx)$ is a subcover of a quotient curve $\xx/G$, for some linear nontrivial subgroup $G<\aut(\xx)$, then $\xx^{'}$ is degenerate. In particular, if $\xx$ is a plane curve, then $\xx$ is strange.
\end{prop}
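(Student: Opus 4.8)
The plan is to show that the hypothesis forces the Gauss map $\gamma$ to be constant on the $G$-orbits of $\xx$, and then to exploit that a linear automorphism of $\xx$ induces a linear automorphism of the ambient dual space under which $\xx^{'}$ is invariant; combining these will exhibit $\xx^{'}$ as the pointwise-fixed locus of a nontrivial projective linear transformation, whence degeneracy follows by a fixed-point analysis. To begin, I would record that, since $\varepsilon_n=p^r$, we have the factorization $\gamma=\Phi_{p^r}\circ\gamma_s$ with $\gamma_s=(z_0:\cdots:z_n)$ separable. The assumption that $\gamma_s(\xx)$ is a subcover of $\xx/G$ means $\fqc(\gamma_s(\xx))\subseteq\fqc(\xx)^G$; in particular the coordinate ratios $z_i/z_j$ are $G$-invariant, so $\gamma_s\circ\sigma=\gamma_s$ for every $\sigma\in G$. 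Applying $\Phi_{p^r}$ and using that $\Phi_{p^r}$ commutes with $\sigma$ then gives $\gamma\circ\sigma=\gamma$ for all $\sigma\in G$.

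Next I would invoke the functoriality of the Gauss map with respect to linear automorphisms. Fix a nontrivial $\sigma\in G$, viewed as the restriction to $\xx$ of a projective linear transformation of $\p^n(\fqc)$. Since such a transformation preserves intersection multiplicities of $\xx$ with hyperplanes, it carries $H_P(\xx)$ to $H_{\sigma(P)}(\xx)$ at every generic point $P$; dualizing, $\gamma(\sigma(P))=\sigma'(\gamma(P))$, where $\sigma'$ is the induced projective linear automorphism of the dual space $(\p^n(\fqc))^{'}$. Comparing with $\gamma\circ\sigma=\gamma$ from the previous step, I obtain $\sigma'(\gamma(P))=\gamma(P)$ for every generic $P$, and hence, by density of the generic locus and continuity, $\sigma'$ fixes all of $\xx^{'}$ pointwise. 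Because $\sigma\neq\mathrm{id}$, the dual transformation $\sigma'$ is also nontrivial.

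It then remains a purely linear-algebra step. The pointwise-fixed locus of a nontrivial element of $\mathrm{PGL}_{n+1}(\fqc)$ is the union of the projectivizations of the eigenspaces of a representing matrix, and each such eigenspace is a proper linear subspace (a full eigenspace would force the matrix to be scalar). Since $\xx^{'}$ is irreducible and contained in this finite union, it lies inside one such proper subspace; equivalently, the functions $z_0^{p^r},\ldots,z_n^{p^r}$ satisfy a nontrivial $\fqc$-linear relation, so $\xx^{'}$ is degenerate. For the final assertion, when $\xx$ is a plane curve ($n=2$) a proper linear subspace of $(\p^2(\fqc))^{'}$ containing the one-dimensional irreducible curve $\xx^{'}$ must be a line, so $\xx^{'}$ is a line; by the characterization recalled in the introduction this is precisely the condition that $\xx$ be strange.

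I expect the main obstacle to be the clean justification of the intertwining relation $\gamma\circ\sigma=\sigma'\circ\gamma$, namely that a linear automorphism of $\xx$ really descends to a well-defined automorphism of $\xx^{'}$ compatible with the Gauss map, together with verifying that the induced dual automorphism $\sigma'$ is genuinely nontrivial. The concluding fixed-point argument, while requiring the non-diagonalizable (single-eigenvalue Jordan) case to be treated alongside the diagonalizable one, is routine once the intertwining is in place.
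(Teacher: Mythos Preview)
Your proposal is correct and follows essentially the same route as the paper: both establish the intertwining $\gamma(\sigma(P))=\sigma'(\gamma(P))$ from linearity of $\sigma$, combine it with $G$-invariance of $\gamma_s$ to see that the induced dual action fixes $\xx'$ pointwise, and conclude degeneracy from the fixed locus of a nontrivial projective linear map being a proper (union of) linear subspace(s). One small remark: you do not need (and cannot in general assert) that $\Phi_{p^r}$ commutes with $\sigma$; the identity $\gamma\circ\sigma=\gamma$ already follows from $\gamma_s\circ\sigma=\gamma_s$ by post-composing with $\Phi_{p^r}$.
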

\begin{proof}
Since $G$ is linear, $G$ induces a linear automorphism group in $\xx^{'}$, as for a nontrivial $\sigma \in G$ and a generic point $P \in \xx$, one has $H_{\sigma(P)}(\xx)=\sigma(H_P(\xx))$. Since $\xx^{'}$ differs from $\gamma_s(\xx)$ by a Frobenius colineation, one has an induced  linear action by $G$ in $\gamma_s(\xx)$. Let $P \in \xx$ be a generic point. Then the orbit of $P$ by $G$, namely $G(P)$, lies over the same point $Q \in \xx/G$. Note that since  $G$ is nontrivial, we have $\#G(P)>1$. In its turn, $Q$ lies over a point $\tilde{P} \in \gamma_s(\xx)$. But $\tilde{P}$ corresponds to the osculating hyperplane of all point in $G(P)$, and then $Q$ is fixed by the action induced by $G$. This implies that the group $G$ acting on $\gamma_s(\xx)$ has infinite fixed points. Hence, $G$ is trivial in $\aut(\gamma_s(\xx))$. Therefore, $\gamma_s(\xx)$ is contained in the linear subvariety fixed by $G$ in the dual projective space. 
\end{proof}

\begin{rem}
Recall that every automorphism in a nonsingular curve is linear.
\end{rem}

We now will exploit the above situation for plane curves. If $p>2$ and $\xx$ is a plane strange non-rational curve, then $\deg_s\gamma>1$, and so $\xx$ is nonclassical. Hence, there are $z_0,z_1,z_2 \in \fqc(\xx)$ not all zero and $r>0$ such that
\begin{equation}\label{forma}
z_0^{p^r}x+z_1^{p^r}y+z_2^{p^r}=0.
\end{equation}
Here, $\deg_i \gamma =p^r$ and $\gamma_s=(z_0:z_1:z_2)$. Up to a projective transformation, we may assume that a generic tangent to $\xx$ passes trough $(1:0:0)$. In other words, we may assume $z_0=0$, and $\xx$ is defined by a polynomial dividing
\begin{equation}\label{eqgal}
(Z_1(X,Y,Z))^{p^r}Y+(Z_2(X,Y,Z))^{p^r}Z
\end{equation}
where $Z_i(X,Y,Z) \in \fqc[x,y,z]$ are homogeneous, $i=1,2$, and $z_1, z_2$ and $y$ are the projections of $Z_1,Z_2$ and $Y$ in $\fqc(\xx)$, respectively. 

\begin{thm}\label{gaussgal}
Let $\xx$ be plane curve defined over $\fq$, with $p>2$, such that $\gamma_s(\xx)$ is a subcover of a quotient curve $\xx/G$, for some linear nontrivial subgroup $G<\aut(\xx)$. Then, up to a projective transformation over $\fqc$, $\xx$ is defined by a polynomial dividing \eqref{eqgal} and the following hold:
\begin{itemize}
\item[(i)]  For all $\sigma \in G$, there are $a,b,c \in \fqc$ (depending on $\sigma$) such that 
\begin{equation}\label{shape}
\sigma:(X:Y:Z)\mapsto (aX+bY+cZ:Y:Z).
\end{equation}
\item[(ii)] $G=E_p \rtimes C$ where $E_p$ is an elementary abelian $p$-group and $C$ is a cyclic group  such that $|C|$ divides  $|E_p|-1$. Thus in \eqref{shape} we have $a^{|E_p|-1}=1$ and $a=1$ if and only if $\sigma$ has order $p$.   
 \end{itemize}
\end{thm}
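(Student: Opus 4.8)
The plan is to build directly on Proposition~\ref{degen} together with the normalization recorded in \eqref{forma}--\eqref{eqgal}. Since $\gamma_s(\xx)$ is a subcover of $\xx/G$, that proposition shows $\xx$ is strange, so after a projective change of coordinates over $\fqc$ I may take the nucleus to be $(1:0:0)$; then $z_0=0$, the curve $\xx$ divides \eqref{eqgal}, and $\gamma_s=(0:z_1:z_2)$ has image inside the line $\{(0:\ast:\ast)\}$ of the dual plane, which is precisely the pencil of lines of $\p^2(\fqc)$ through $(1:0:0)$. As $\gamma_s(\xx)$ is a one-dimensional image of $\xx$, it fills this whole dual line.

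For (i) I would use that the proof of Proposition~\ref{degen} shows $G$ acts trivially on $\gamma_s(\xx)$; hence each $\sigma\in G$ fixes every point of that dual line, i.e.\ fixes every line through $(1:0:0)$. Representing $\sigma$ by a matrix $M=(m_{ij})$ acting on $(X,Y,Z)^{t}$, fixing every such line forces $\sigma$ to fix the nucleus, giving $m_{21}=m_{31}=0$, and forces the block $\left(\begin{smallmatrix}m_{22}&m_{23}\\ m_{32}&m_{33}\end{smallmatrix}\right)$ to fix every point of $\p^1(\fqc)$, hence to be a scalar matrix. Rescaling so that this scalar equals $1$ yields exactly the shape \eqref{shape}, with $a=m_{11}\neq0$.

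For (ii) the composition law $\sigma_{a,b,c}\circ\sigma_{a',b',c'}=\sigma_{aa',\,ab'+b,\,ac'+c}$ exhibits $G$ as a finite subgroup of $H\cong\fqc^{\ast}\ltimes(\fqc\times\fqc)$, the multiplicative factor acting on the additive part by diagonal scaling. I would put $E_p=\{\sigma_{1,b,c}\in G\}$, the kernel of the projection $\pi\colon\sigma_{a,b,c}\mapsto a$; being a finite subgroup of $(\fqc\times\fqc,+)$ it is elementary abelian and normal in $G$, while $C=\pi(G)$ is a finite subgroup of $\fqc^{\ast}$, hence cyclic, of order prime to $p$. Schur--Zassenhaus then splits $1\to E_p\to G\to C\to1$ as $G=E_p\rtimes C$. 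A short computation shows that conjugation by an element with multiplicative part $a$ acts on $E_p$ as scaling by $a$, and that any $\sigma_{a,b,c}$ with $a\neq1$ is $H$-conjugate to $\sigma_{a,0,0}$, whose order is $\mathrm{ord}(a)$, prime to $p$; this already yields the last assertion that a nontrivial $\sigma$ has order $p$ precisely when $a=1$.

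The main obstacle is the divisibility $|C|\mid|E_p|-1$. I would fix a generator $\sigma_0$ of the complement $C$ with multiplicative part $a_0$ of order $|C|$, and note that the scaling action turns $E_p$ into a vector space over the field $\F_p(a_0)=\F_{p^m}$, where $m=[\F_p(a_0):\F_p]$; thus $|E_p|=(p^m)^{k}$ for some $k\geq0$. Since $|C|=\mathrm{ord}(a_0)$ divides $p^m-1$, and $p^m-1$ divides $(p^m)^k-1=|E_p|-1$, the divisibility follows (the case $E_p$ trivial being immediate, as every integer divides $0$). Finally, combining $a_0^{|C|}=1$ with $|C|\mid|E_p|-1$ gives $a^{|E_p|-1}=1$ for every $\sigma_{a,b,c}\in G$, completing (ii).
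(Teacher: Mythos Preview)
Your argument is correct. Part (i) is essentially identical to the paper's: both extract from Proposition~\ref{degen} that $G$ acts trivially on $\gamma_s(\xx)$, hence fixes every tangent line through the nucleus $(1:0:0)$, and both read off the matrix shape \eqref{shape} from this (the paper phrases it as $\sigma(y)=y$ together with $\sigma(1:0:0)=(1:0:0)$, which is the same statement).

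The genuine difference is in part (ii). The paper restricts $G$ to a generic tangent line $\ell$ (on which the action is faithful since $G$ is finite and a generic $\ell$ avoids the finitely many directions $(b:c)$ coming from $E_p$), thereby identifying $G$ with a subgroup of $\operatorname{PGL}_2(\fqc)$ fixing a point, and then invokes \cite[Theorem~1]{VM} to get the semidirect decomposition and the divisibility $|C|\mid|E_p|-1$ in one stroke. You instead work intrinsically inside $H=\fqc^{\ast}\ltimes\fqc^{2}$: the splitting comes from Schur--Zassenhaus (in fact your conjugacy computation $\sigma_{a,b,c}\sim_H\sigma_{a,0,0}$ already furnishes a complement directly, so Schur--Zassenhaus is not strictly needed), and the divisibility is obtained by observing that conjugation makes $E_p$ an $\F_p(a_0)$-vector space, whence $|E_p|=p^{mk}$ and $|C|\mid p^{m}-1\mid p^{mk}-1$. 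Your route is more self-contained and avoids the external reference; the paper's is shorter once \cite{VM} is granted. Both yield the final assertions about $a^{|E_p|-1}=1$ and the characterization of order-$p$ elements in the same way.
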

\begin{proof}
Since $G$ is nontrivial, from Proposition \ref{degen}, we know that $\xx$ is an strange curve. Thus $\xx$ is defined by an irreducible factor of \eqref{eqgal}. Since $\fqc(\gamma_s(\xx))=\fqc(z_2/z_1) \subset \fqc(\xx)^G$, then $\sigma \in G$ fixes $z_2/z_1$. We conclude that $\sigma(y)=y$ for all $\sigma \in G$. Moreover, since $G$ preserves a generic tangent of $\xx$ and all such tangents meet at $(1:0:0)$, we obtain that $G$ fixes $(1:0:0)$. Therefore, we obtain \eqref{shape}

Finally, considering a generic tangent line $\ell$ to $\xx$ (not fixed pointwise by $G$), we have that the restriction of $G$ to $\ell$ is an automorphism of $\ell$ fixing the point $P=(1:0:0)$. Hence $G$ is isomorphic to a subgroup of the stabilizer of a point on the projective line. Thus it follows from \cite[Theorem 1]{VM} that $G=E_p \rtimes C$ where $E_p$ is an elementary abelian $p$-group and $C$ is cyclic with order dividing $|E_p|-1$. The last statement is straightforward.  
\end{proof}

\begin{ex}\label{exest}
If $p=5$, the curve $\ff$ given by the affine equation $(yx^3+2y)^5y+(x^3+2y)^5=0$ is irreducible and has order sequence $(0,1,5)$. Such curve satisfies the hypothesis of Theorem \ref{gaussgal}, with the automorphism group generated by $(x,y) \mapsto (\xi x, y)$, where $\xi$ is a primitive cubic root of unity.
\end{ex}

The next result shows that the equation defining the curve $\ff$ is example \ref{exest} is not so specific.

\begin{thm}\label{gaussgal2}
Let $\xx$ be plane curve defined over $\fq$, with $p>2$, such that $\gamma_s(\xx)$ is a subcover of a quotient curve $\xx/G$, for some linear nontrivial subgroup $G<\aut(\xx)$ of order $n$. If $p \nmid |G|$, then up to a projective transformation over $\fqc$, $\xx$ is defined by an irreducible factor of a polymomial of the type
\begin{equation}\label{geral}
z_1\big(x^n,y\big)^{p^r}y+z_2\big(x^n,y\big)^{p^r},
\end{equation} 
with $z_1,z_2 \in \fqc[x,y]$, $r>0$ and $G=\langle \sigma \rangle$, with $\sigma:(x,y) \mapsto (\xi x, y)$, for a primitive $n$-th root of unity $\xi \in \fqc$. 
\end{thm}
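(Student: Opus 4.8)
The plan is to determine the structure of $G$, normalize $\sigma$ by a linear change of coordinates, and then read off the equation of $\xx$ from the fixed field $\fqc(\xx)^G$.

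Since $p \nmid |G|$, in the semidirect decomposition $G = E_p \rtimes C$ of Theorem~\ref{gaussgal}(ii) the $p$-group $E_p$ must be trivial; hence $G = \langle\sigma\rangle$ is cyclic of order $n$ with $p \nmid n$. By Theorem~\ref{gaussgal}(i), $\sigma$ is the collineation $(X:Y:Z)\mapsto(aX+bY+cZ:Y:Z)$. Writing out $\sigma^k$, the requirement that $\sigma$ have order exactly $n$ in $\mathrm{PGL}_3$ forces $a^n=1$ together with $a\ne1$, and rules out any $a$ of order $<n$, so $a=\xi$ is a primitive $n$-th root of unity. Because $\xi\neq1$, the substitution $X\mapsto X+\tfrac{b}{\xi-1}Y+\tfrac{c}{\xi-1}Z$ (fixing $Y,Z$) conjugates $\sigma$ to the diagonal map $(X:Y:Z)\mapsto(\xi X:Y:Z)$, that is, $\sigma(x,y)=(\xi x,y)$ in the affine chart $x=X/Z$, $y=Y/Z$. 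This substitution fixes the nucleus $(1:0:0)$ and preserves the normalization $z_0=0$ of Theorem~\ref{gaussgal}, so I may assume $\sigma(x,y)=(\xi x,y)$ from now on.

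Now $\xx$ is strange (Proposition~\ref{degen}) with $\varepsilon_n=p^r$, so there are $z_1,z_2\in\fqc(\xx)$, necessarily with $z_1\neq0$, such that $z_1^{p^r}y+z_2^{p^r}=0$; and, as established inside the proof of Theorem~\ref{gaussgal}, $w:=z_2/z_1\in\fqc(\xx)^G$. The key computation is that $\fqc(\xx)^G=\fqc(x^n,y)$: the elements $x,\xi x,\dots,\xi^{n-1}x$ are $n$ distinct $\sigma$-conjugates of $x$, so $T^n-x^n$ is the minimal polynomial of $x$ over $\fqc(\xx)^G$, giving $[\fqc(\xx):\fqc(x^n,y)]=n=[\fqc(\xx):\fqc(\xx)^G]$ and hence equality of the two fields. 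Thus I may write $w=A(x^n,y)/B(x^n,y)$ with $A,B\in\fqc[x^n,y]$. Since $\gamma_s=(0:z_1:z_2)$ is defined only up to a common factor, there is $\rho\in\fqc(\xx)^\times$ with $z_1=\rho B$ and $z_2=\rho A$, and substituting into the relation gives $\rho^{p^r}\big(B(x^n,y)^{p^r}y+A(x^n,y)^{p^r}\big)=0$, whence $B(x^n,y)^{p^r}y+A(x^n,y)^{p^r}$ vanishes on $\xx$. Therefore the defining polynomial of $\xx$ divides a polynomial of the form \eqref{geral} with $z_1=B$, $z_2=A$ and $G=\langle\sigma\rangle$, $\sigma(x,y)=(\xi x,y)$, which is the assertion.

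The main obstacle is the fixed-field identification $\fqc(\xx)^G=\fqc(x^n,y)$ and, with it, the passage from the function-field relation $z_1^{p^r}y+z_2^{p^r}=0$ to an honest polynomial identity in $x^n$ and $y$: one must guarantee that $w$ admits representatives that are polynomials in $x^n,y$ (here the normalization $\sigma(x,y)=(\xi x,y)$ from the second step is indispensable) and that the projective scaling freedom in $\gamma_s$ turns the relation into \eqref{geral} without introducing denominators, which works precisely because the relation is homogeneous of degree $p^r$ in $(z_1,z_2)$.
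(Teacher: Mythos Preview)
Your proof is correct and follows essentially the same route as the paper: use Theorem~\ref{gaussgal} to reduce $G$ to a cyclic group generated by $\sigma:(x,y)\mapsto(ax+by+c,y)$, conjugate by the same affine shift $x\mapsto x+\tfrac{b}{a-1}y+\tfrac{c}{a-1}$ to diagonalize $\sigma$, identify the fixed field as $\fqc(x^n,y)$ by a degree count, and conclude that $z_2/z_1$ is a rational function of $x^n,y$. Your treatment is in fact slightly more careful than the paper's in two places: you explicitly argue that $a$ must be a \emph{primitive} $n$-th root of unity (the paper tacitly uses this when invoking $|G|=n$ in the degree comparison), and you handle the passage from the rational relation $y+(z_2/z_1)^{p^r}=0$ to a genuine polynomial identity via the projective rescaling of $\gamma_s$, whereas the paper dispatches this with a ``formal replacement''.
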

\begin{proof}
Suppose that $|G|$ is not divisible by $p$. By Theorem \ref{gaussgal}, if $\xx:f(x,y)=0$, we have that $\fqc(\xx)=\fqc(x,y)$, there are $z_1,z_2 \in \fqc(\xx)$ with $z_2/z_1 \in \fqc(x,y)^G$ such that $z_1^{p^r}y+z_2^{p^r}=0$ and $G$ is a cyclic group generated by an automorphism $\sigma:(x,y)\mapsto (ax+by+c,y)$ with $a \neq 1$. Consider the change of coordinates defined by 
$$
T:(x,y) \mapsto (\bar{x},y)=\left(x+\frac{b}{a-1}y+\frac{c}{a-1},y\right).
$$
Then $\tilde{\sigma}:=T \circ \sigma \circ T^{-1}:(\bar{x},y)=(a \bar{x},y)$. Moreover, $z_i(x,y)=z_i(\bar{x}-\frac{b}{a-1}y-\frac{c}{a-1},y)=:\tilde{z_i}(\bar{x},y) \in \fqc(\bar{x},y)$, and one can check directly that $\tilde{\sigma}(\tilde{z_2}(\bar{x},y)/\tilde{z_1}(\bar{x},y))=\tilde{z_2}(\bar{x},y)/\tilde{z_1}(\bar{x},y)$. Now, we clearly have $\fqc(\bar{x}^n,y) \subset \fqc(\bar{x},y)^G$, and since $|G|=n$ equality holds. Since $\tilde{z_2}(\bar{x},y)/\tilde{z_1}(\bar{x},y) \in \fqc(\bar{x},y)^G$, we conclude that $\tilde{z_2}(\bar{x},y)/\tilde{z_1}(\bar{x},y)=z^{'}(\bar{x}^n,y)$, for some rational function $z^{'}$. Finaly,
$$
y+z^{'}(\bar{x}^n,y)^{p^r}=y+\frac{z_2(x,y)^{p^r}}{z_1(x,y)^{p^r}}=0.
$$
Therefore, the result follows from a formal replacement of $\bar{x}$ with $x$ and $z^{'}$ with $z_2/z_1$.
\end{proof}

We now give a characterization of equation \eqref{eqgal} when $p$ divides $|G|$. Let $\xx$ and $G$ as in Theorem \ref{gaussgal}, and let $E_p<G$ be the $p$-Sylow subgroup of $G$ of order $p^s$. Let $m$ be smallest integer for which $G$ is defined over $\F_{p^m} \supseteq \fq$. In particular, $s \leq m$, see \cite[Theorem 3]{VM}. By Theorem \ref{gaussgal}, every element of $E_p$ is defined by $\tau_{b,c}:(x,y) \mapsto (x+by+c,y)$, with $b,c \in  \F_{p^m}$. Hence $E_p$ is isomorphic to the additive group
$$
\Lambda:=\{(b,c) \ | \ \tau_{b,c} \in G \}< \F_{p^m} \times \F_{p^m}.
$$

\begin{thm}
Let $\xx$ be plane curve defined over $\fq$, with $p>2$, such that $\gamma_s(\xx)$ is a subcover of a quotient curve $\xx/G$, for some linear nontrivial subgroup $G<\aut(\xx)$. Suppose that $p$ divides $|G|$ and $\F_{p^m}$ is the smallest extension of $\fq$ for which $G$ is defined. Then there exists $n>0$ such that, up to a projective transformation over $\fqc$, $\xx$ is defined by an irreducible factor of a polynomial of the type
\begin{equation}\label{geral2}
z_1\Big(\prod_{(b,c) \in \Lambda}(x+by+c)^n,y\Big)^{p^r}y+z_2\Big(\prod_{(b,c) \in \Lambda}(x+by+c)^n,y\Big)^{p^r},
\end{equation} 
with $z_1,z_2 \in \fqc[x,y]$, $r>0$ and $\Lambda <\F_{p^m} \times \F_{p^m}$ is an additive subgroup of $\F_{p^m} \times \F_{p^m}$ of order $p^s \leq p^m$ such that $n$ divides $\gcd(p^s-1,p^m-1)$. Moreover, $G=E_p \rtimes \langle \sigma \rangle$, with $\sigma:(x,y) \mapsto (\xi x, y)$ for a primitive $n$-th root of unity $\xi \in \fqc$ and $E_p=\{\tau_{b,c}:(x,y) \mapsto (x+by+c,y) \ | \ (b,c) \in \Lambda\} \cong \Lambda$ is elementary abelian $p$-group. 
\end{thm}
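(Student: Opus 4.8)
The plan is to compute the fixed field $\fqc(\xx)^G$ explicitly as a tower, first factoring out the elementary abelian part $E_p$ and then the cyclic quotient, and to read off the shape of the defining polynomial from the inclusion $\fqc(z_2/z_1)\subset\fqc(\xx)^G$ supplied by the hypothesis. By Theorem \ref{gaussgal} I may already assume $\xx$ is strange, defined by an irreducible factor of \eqref{eqgal}, that $G=E_p\rtimes\langle\sigma\rangle$ with every element of the shape \eqref{shape}, and that $\fqc(\gamma_s(\xx))=\fqc(z_2/z_1)\subset\fqc(\xx)^G$; moreover part (ii) gives that $n:=|\langle\sigma\rangle|$ divides $|E_p|-1=p^s-1$. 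First I would normalize coordinates: writing a generator of the cyclic complement as $\sigma:(x,y)\mapsto(ax+by+c,y)$ with $a\neq1$, the translation $T:(x,y)\mapsto(x+\frac{b}{a-1}y+\frac{c}{a-1},y)$ conjugates $\sigma$ to $(x,y)\mapsto(\xi x,y)$ with $\xi=a$ a primitive $n$-th root of unity, while leaving every $\tau_{b_0,c_0}\in E_p$ unchanged (all maps $x\mapsto x+g(y)$ commute); thus $\Lambda$ is unaffected and $E_p=\{\tau_{b,c}:(b,c)\in\Lambda\}$ with $\Lambda<\F_{p^m}\times\F_{p^m}$ of order $p^s$.

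The key step is the invariant theory. I would take
$$
w:=\prod_{(b,c)\in\Lambda}(x+by+c),
$$
the separable $\F_p$-linearized polynomial in $x$ whose roots are the $\F_p$-subspace $\{by+c:(b,c)\in\Lambda\}$ (the $p^s$ linear forms are distinct because $y$ is transcendental over $\F_{p^m}$). For $\tau_{b_0,c_0}\in E_p$ the substitution $(b,c)\mapsto(b+b_0,c+c_0)$ permutes $\Lambda$, so $w$ is $E_p$-invariant; since $\deg_x w=p^s=|E_p|$, a degree count gives $\fqc(\xx)^{E_p}=\fqc(w,y)$. Next I would compute the residual action of $\sigma$: as $\sigma$ normalizes $E_p$, the relation $\sigma\tau_{b,c}\sigma^{-1}=\tau_{\xi b,\xi c}$ forces $\xi\Lambda=\Lambda$, and factoring $\xi$ out of each of the $p^s$ forms and reindexing by $(b,c)\mapsto(\xi^{-1}b,\xi^{-1}c)$ yields $\sigma(w)=\xi^{p^s}w$. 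Because $n\mid p^s-1$ we have $\xi^{p^s}=\xi$, so $\sigma(w)=\xi w$ and $\sigma(y)=y$; this Kummer-type action gives $\fqc(\xx)^G=\fqc(w,y)^{\langle\sigma\rangle}=\fqc(w^n,y)$ with $w^n=\prod_{(b,c)\in\Lambda}(x+by+c)^n$. The divisibility then drops out: $n\mid p^s-1$ from Theorem \ref{gaussgal}(ii) and $n\mid p^m-1$ since $\xi\in\F_{p^m}$, whence $n\mid\gcd(p^s-1,p^m-1)$.

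Finally I would combine this with the strange structure. From \eqref{forma} with $z_0=0$ we have $y=-(z_2/z_1)^{p^r}$, and the hypothesis gives $z_2/z_1\in\fqc(\xx)^G=\fqc(w^n,y)$, so $z_2/z_1=z_2(W,y)/z_1(W,y)$ for polynomials $z_1,z_2\in\fqc[W,y]$ evaluated at $W=w^n=\prod_{(b,c)\in\Lambda}(x+by+c)^n$. Clearing denominators in $z_1^{p^r}y+z_2^{p^r}=0$ shows that the defining polynomial of $\xx$ divides an expression of the form \eqref{geral2}, and the asserted description of $G$ has already been obtained during the normalization. I expect the main obstacle to be the invariant-theoretic heart, namely recognizing $w$ as the correct $E_p$-invariant and establishing the transformation law $\sigma(w)=\xi^{p^s}w=\xi w$ cleanly; once the tower $\fqc(\xx)^{E_p}=\fqc(w,y)\supset\fqc(w^n,y)=\fqc(\xx)^G$ is in place, the remaining steps are formal.
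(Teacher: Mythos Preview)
Your proposal is correct and follows essentially the same route as the paper: normalize so that $\sigma:(x,y)\mapsto(\xi x,y)$ via the conjugation $T$ from Theorem~\ref{gaussgal2}, identify $E_p$ with an additive subgroup $\Lambda<\F_{p^m}\times\F_{p^m}$, build the invariant from the product $\prod_{(b,c)\in\Lambda}(x+by+c)$, and conclude by a degree count that $\fqc(\xx)^G=\fqc(w^n,y)$, so that $z_2/z_1$ lies there. The only cosmetic difference is that the paper takes $w:=\prod_{(b,c)\in\Lambda}(x+by+c)^n$ with the $n$-th power already included and checks $G$-invariance in one shot (using $\xi^n=1$ and the reindexing $(b,c)\mapsto(b/\xi,c/\xi)$), whereas you split the computation into the tower $\fqc(\xx)\supset\fqc(\xx)^{E_p}=\fqc(w,y)\supset\fqc(w^n,y)=\fqc(\xx)^G$ and obtain the sharper transformation law $\sigma(w)=\xi^{p^s}w=\xi w$; your explicit justification of $\xi\Lambda=\Lambda$ via $\sigma\tau_{b,c}\sigma^{-1}=\tau_{\xi b,\xi c}$ is in fact a point the paper uses only implicitly.
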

\begin{proof}
By Theorem \ref{gaussgal} and the proof of Theorem \ref{gaussgal2}, we may assume that \eqref{eqgal} holds and $G= E_p \rtimes \langle \sigma \rangle$, with $\sigma:(x,y) \mapsto (\xi x, y)$ for a primitive $n$-th root of unity $\xi \in \fqc$, $E_p$ is elementary abelian $p$-group of order $p^s$ and $n | p^s-1$. In fact, using notation of Theorem \ref{gaussgal2}, one can check that $T \circ \tau_{d,e} \circ T^{-1}=\tau_{d,e}$ and $\tau_{d,e}(\tilde{z_i}(\bar{x},y))=\tilde{z_i}(\bar{x},y)$ for $i=1,2$. Here, if $m$ is the highest power of $p$ for which $G$ is defined over $\F_{p^m}$, we have $q \leq p^m$, and by \cite[Theorem 3]{VM}, we have $s \leq m$ and $n| p^m-1$. In particular, multiplication  by $\xi$ gives a faithful action on $\F_{p^m}$. Now, the function 
$$
w:=\prod_{(b,c) \in \Lambda}(x+by+c)^n
$$ 
is fixed by every element of $G$. Indeed,
$$
\sigma(w)=\prod_{(b,c) \in \Lambda}(\xi x+by+c)^n=\prod_{(b/\xi,c/\xi) \in \Lambda}\xi^n( x+(b/\xi)y+c/\xi)^n=w,
$$
and
$$
\tau_{d,e}(w)=\prod_{(b,c) \in \Lambda}(x+(b+d)y+(c+e))^n=\prod_{(b+d,c+e) \in \Lambda}(x+(b+d)y+(c+e))^n=w.
$$
Thus $\fqc(w,y) \subseteq \fqc(\xx)^G$. On the other hand, $[\fqc(\xx):\fqc(w,y)] \leq p^sn=|G|$, and so $\fqc(w,y)=\fqc(\xx)^G$. Since $z_2/ z_1 \in \fqc(\xx)^G$, we obtain the claimed result.
\end{proof}

\begin{rem}
Let $\xx$ be a plane curve of degree $d$ defined over $\fqc$. We say that $\xx$ has controlled singularities if 
$$
\sum_{P \in \xx}e_P<\frac{1}{2}d,
$$ where $e_P$ denotes the multiplicity of the Jacobian ideal of $\xx$ at $P$. In \cite[Corollaries 5.10 and 5.16]{He}, it is shown that if $\xx$ is nonlcassical with order sequence $(0,1,p^r)$ and it has controlled singularities, then $\xx$ has an equation of the form \eqref{forma}. However, one can check that strange curves does not have controlled singularities. Thus such criteria cannot be applied to our case.
\end{rem}

\begin{rem}
Given a plane curve $\xx$, a point $P \in \p^2(\fqc)$ is called a Galois point of $\xx$ if the projection $\pi_P: \xx \lra \p^1(\fqc)$ from $P$ induces a Galois extension of function fields $\fqc(\xx)/\fqc(\p^1)$, see for instance \cite{Fu,Yo}.  One can check that if $\fqc(\gamma_s(\xx))=\fqc(\xx)^G$ for a linear automorphism group $G$, then the nucleus of $\xx$ (which exists, since in such case $\xx$ is strange) is a Galois point of $\xx$.
\end{rem}

\section*{Acknowledgements}
This research was partially supported by FAPESP-Brazil, grant 2016/24713-4.


\printindex

\end{document}